\newtheorem{definition}{Definition}[section]
\newtheorem{theorem}[definition]{Theorem}
\newtheorem{lemma}[definition]{Lemma}
\newtheorem{corollary}[definition]{Corollary}
\newtheorem{example}[definition]{Example}
\newtheorem{proposition}[definition]{Proposition}
\newtheorem{algr}[definition]{Algorithm}
\def\N{{\mathbb N}}
\def\Z{{\mathbb Z}}
\def\R{{\mathbb R}}
\def\A{{\mathcal A}}
\def\1{\textbf{1}}
\DeclareMathOperator{\graph}{graph}
\begin{document}

\title{On the Mathematical Validity of the Higuchi Method}

\author{Lukas Liehr\\Peter Massopust${}^*$}
\thanks{${}^*$Corresponding author}
\address{Centre of Mathematics\\Technical University of Munich\\Boltzmannstrasse 3\\85748 Garching b. Munich\\ Germany}
\email{massopust@ma.tum.de}

%
%

\begin{abstract}
In this paper, we discuss the Higuchi algorithm which serves as a widely used estimator for the box-counting dimension of the graph of a bounded function $f : [0,1] \to \R$. We formulate the method in a mathematically precise way and show that it yields the correct dimension for a class of non-fractal functions. Furthermore, it will be shown that the algorithm follows a geometrical approach and therefore gives a reasonable estimate of the fractal dimension of a fractal function. We conclude the paper by discussing the robustness of the method and show that it can be highly unstable under perturbations.

\end{abstract}

\keywords{Higuchi method, box-counting dimension, Weierstrass function, fractal function, total variation}
\subjclass{28A80, 37L30}

\maketitle

\section{Introduction and Preliminaries}

In the following, we provide a mathematical investigation of the Higuchi method \cite{higuchi}, an algorithm which aims at approximating the box-counting dimension of the graph $\Gamma$ of a bounded real valued function $f: [0,1] \to \R$ arising from a (measured) time series. For numerous applications, the graphs of these functions exhibit fractal characteristics and the Higuchi method is therefore used in various areas of science where such functions appear. It has been applied to such diverse subjects  as analyzing heart rate variability \cite{GKKTSK}, characterizing primary waves in seismograms \cite{GMPBA}, analyzing changes in the electroencephalogram in Alzheimer’s disease \cite{HJSI}, and digital images \cite{ahammer}. In the original article, the validity of the algorithm was based on a number of numerical simulations and not on an exact mathematical validation. 

This article presents mathematically precise conditions of when and for what type of functions the Higuchi method gives the correct value for the box-counting dimension of its graph. Of particular importance in this context are also the robustness and the behavior of the algorithm under perturbation of the data. The latter reflects imprecise measurements and the existence of measurement errors. We show that the Higuchi method as proposed in \cite{higuchi} is neither robust nor stable under perturbations of the data.

This paper uses the following notation and terminology. The collection of all continuous real-valued functions $f: [a,b] \to \R$ will be denoted by $C[a,b]$. Such functions $f$ will be regarded as models for continuous signals measured over time $t \in [a,b]$ with values $f(t)\in \R$. A function $f: [a,b] \to \R$ is called bounded if there exists a nonempty interval $[m,M]\subset \R$ such that $f(x) \in [m,M]$, for all $x\in [a,b]$.

In many applications one is interested in characterizing the irregularity of the graph $\Gamma$ of a bounded function $f$:
\begin{equation*}
\Gamma \coloneqq \mathrm{graph}(f) \coloneqq \{ (t,f(t)) \ | \ t \in [a,b] \} \subset \R^2.
\end{equation*}
 
A widely used measurement for the irregularity of $f$ is given by the box-counting dimension of its graph $\Gamma$. 

\begin{definition}[Box-counting dimension]
Suppose $F$ be a nonempty bounded subset of $\R^n$ and $\delta > 0$. Denote by $N_\delta(F)$ the smallest number of boxes of side length equal to $\delta$ needed to cover $F$. The box-counting dimension of $F$ is defined to be
\begin{equation}\label{defboxdim}
\mathrm{dim}_B(F) = \lim_{\delta \to 0+} \frac{\log(N_\delta(F))}{\log(\frac{1}{\delta})},
\end{equation}
provided the limit exists.
\end{definition}
\noindent
For more details, we refer the interested reader to, for instance, \cite{falconer1,falconer2, massopust}. It follows from the above definition that any nonempty bounded subset $F$ of $\R^n$ satisfies $\mathrm{dim}_B(F) \leq n$. 

We note that if $f \in C[a,b]$, the graph of $f$ is bounded and thus $1 \leq \mathrm{dim}_B(\Gamma) \leq 2$. The case where $\mathrm{dim}_B(\Gamma) = 2$, i.e., when $f$ is a so-called space-filling or Peano curve will not be considered here.

\section{The Higuchi Fractal Dimension}
In this section, we review the definition of the Higuchi method as introduced in \cite{higuchi} and derive some immediate consequences. Without loss of generalization, we restrict ourselves to the unit interval $[0,1]$.

For a given bounded function $f: [0,1] \to \R$ and $N \in \N, N \geq 2$, we define a finite time series by
\begin{equation*}
X_N : \{ 1, \dots, N\} \to \R, \quad X_N(j) := f\left( \frac{j-1}{N-1} \right ).
\end{equation*}
The time series $X_N$ represents $N$ samples of $f$ obtained by a uniform partitioning of $[0,1]$ in $N-1$ subintervals.

The Higuchi method uses the time series $X_N$ and a parameter $k_\mathrm{max} \in \N$ with the property $1 \leq k_\mathrm{max} \leq \lceil\frac{N}{2}\rceil$ (here, $\lceil \cdot \rceil : \R \to \Z$ denotes the ceiling function) to compute a fractal dimension, which we refer to as the Higuchi fractal dimension (HFD). This approach is originally stated in \cite{higuchi}. We start by formulating the method in a mathematically precise way. 

For this purpose, we first present an algorithmic description of the Higuchi method based on the original article \cite{higuchi}.

\begin{algr}[Higuchi fractal dimension]\label{algo}
\textcolor{white}{.}\newline
\begin{algorithm}[H]

\SetAlgoLined
 \textbf{Input}: $2 \leq N\in \N, \ X_N : \{ 1, \dots, N\} \to \R$, $k_\mathrm{max} \in \N$, $k_\mathrm{max} \leq \lceil \frac{N}{2} \rceil$ \;
 \textbf{Output}: Higuchi fractal dimension D = $\verb|HFD|(X_N,N,k_\mathrm{max})$ \;
 
 \For{$k = 1, \dots, k_\mathrm{max} $}{
 \For{$m=1, \dots, k$}{
  $C_{N,k,m} = \frac{N-1}{\left\lceil \frac{N-m}{k} \right \rceil k}$\;
  $V_{N,k,m} = \sum\limits_{i=1}^{\left \lceil \frac{N-m}{k} \right \rceil} |X_N(m+ik)-X_N(m+(i-1)k)|$\;
  $L_m(k) = \frac{1}{k} C_{N,k,m}V_{N,k,m}$\;
  }
  $L(k) = \frac{1}{k} \sum\limits_{m=1}^k L_m(k)$\;
 }
 $\mathcal{I} = \{ k \in \{ 1, \dots, k_\mathrm{max} \} \ | \ L(k) \neq 0 \}$\;
 $\mathcal{Z} = \left \{ \left ( \log \frac{1}{k} ,\log L(k)  \right ) \ | \ k \in I \right \}$\;
 \eIf{$|\mathcal{I}| \leq 1$}{
 D=1\;
 }{
 $f(x) = Dx+h$ :  best fitting affine function through $\mathcal{Z}$\;
 }
\end{algorithm}

\end{algr}

\begin{figure}[h!]
\centering
\includegraphics[scale=0.53]{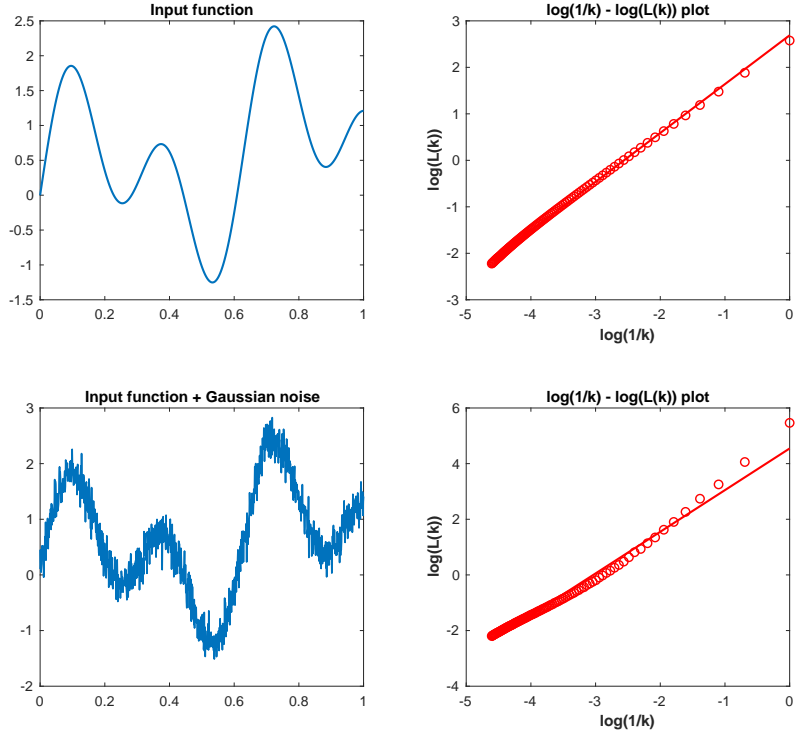}
\caption{Two input functions on the left with their corresponding regression line through the data set $\mathcal{Z}$ on the right. The slope of the first line is $\approx 1.05$ and the slope of the second is $\approx 1.49$.}\label{fig1}
\end{figure}

The Higuchi algorithm uses $N$ samples of $f$ and then forms $k_\mathrm{max}$ ``lengths" $L(1)$, $\dots$, $L(k_\mathrm{max})$. Each length $L(k)$ is the average over $k$ values $L_1(k), \dots, L_k(k)$,
\begin{equation*}
L(k) = \frac{1}{k} \sum_{m=1}^k L_m(k) \ ,
\end{equation*}
and the values $L_m(k)$ are computed via
\begin{equation*}
L_m(k) = \frac{1}{k} \frac{N-1}{\left \lceil \frac{N-m}{k} \right \rceil k}  {\sum_{i=1}^{\left \lceil \frac{N-m}{k} \right \rceil} \left|X_N(m+ik)-X_N(m+(i-1)k)\right|}.
\end{equation*}
For notational simplicity and the mathematical analysis of the terms appearing in $L_m(k)$, we set
\[
C_{N,k,m} := \frac{N-1}{\left \lceil \frac{N-m}{k} \right \rceil k}
\]
and
\[
V_{N,k,m} := \sum_{i=1}^{\left \lceil \frac{N-m}{k} \right \rceil} \left|X_N(m+ik)-X_N(m+(i-1)k)\right|.
\]
We use the notation $V_{N,k,m}$ to emphasize that this value can be regarded as an approximation for the total variation of $f$ on the interval $[0,1]$. The relation between the total variation of $f$ and the Higuchi method will be discussed below in Section 3.

The algorithm then collects all indices $k$ with $L(k) \neq 0$ into an index set $\mathcal{I} = \{ k \in \{ 1, \dots, k_\mathrm{max} \} \ | \ L(k) \neq 0 \}$ and defines a data set 
\begin{equation*}
\mathcal{Z} = \left \{ \left ( \log \frac{1}{k} ,\log L(k)  \right ) \ | \ k \in \mathcal{I} \right \}.
\end{equation*}

The slope $D$ of the best fitting affine function through $\mathcal{Z}$ (in the least-square sense) is defined to be the Higuchi fractal dimension of $f$ with parameters $N$ and $k_\mathrm{max}$: $D = \verb|HFD|(X_N,N,k_\mathrm{max})$.

If the index set $\mathcal{I}$ is empty or contains just one element then $D$ is defined to be $1$. We emphasize that in general we cannot guarantee that the values $L(k)$ differ from zero (the simplest examples are constant functions where $L(k) = 0$ for all $k$). Therefore, the definition of the index set $\mathcal{I}$ is necessary.

Recall that for a given data set $\{(x_i,y_i) \ | \ i = 1, \ldots, n\}$ the slope $D$ of an affine function that is the best least squares fit of the data can be computed via
\begin{equation}\label{slope}
D = \frac{\sum\limits_{i=1}^n (x_i - \overline{x})(y_i-\overline{y})}{\sum\limits_{i=1}^n (x_i-\overline{x})^2},
\end{equation}
where $\overline{x}$ and $\overline{y}$ are the mean values of $\{x_i \ | \ i = 1, \ldots, n\}$ and $\{y_i \ | \ i = 1, \ldots, n\}$, respectively. For every $N$ samples of $f$ one has to specify a $k_\mathrm{max}$. Moreover, we need $1 \leq k_\mathrm{max} \leq \lceil\frac{N}{2} \rceil$ since otherwise the values $V_{N,k,m}$ are not well defined for all $1 \leq k \leq k_\mathrm{max}$ and all $1 \leq m \leq k$.

\begin{definition}[Admissible Input]
We call a pair $(N,k_\mathrm{max}) \in \N^2$ admissible  for the Higuchi method if
\begin{equation*}
N \geq 2 \ \ \mathrm{and} \ \  k_\mathrm{max} \leq \left\lceil \tfrac{N}{2} \right\rceil.
\end{equation*}
\end{definition}

The notation $\verb|HFD| (f,N,k_{\mathrm{max}})$ is used to denote the Higuchi fractal dimension of $f$ with parameters $N$ and $k_\mathrm{max}$.
The following observations follow immediately from the definition of the algorithm.

\begin{proposition}\label{basics}
Let $f : [0,1] \to \R$ be a bounded function and $(N,k_\mathrm{max}) \in \N^2$ an admissible pair.
\begin{enumerate}
  \item If f is constant then $L(k)=0$ for all $k \in \{ 1, \dots, k_\mathrm{max} \}$ and therefore $\verb|HFD| (f,N,k_{\mathrm{max}}) = 1$.
  \item If  $L(k) \propto k^{-D}$ for all $k \in \mathcal{I}$ with $|\mathcal{I}| \geq 2$, then $\verb|HFD| (f,N,k_{\mathrm{max}}) = D$.
  \item If $f$ is affine then $\verb|HFD| (f,N,k_{\mathrm{max}}) = 1$ for every admissible choice of $(N,k_\mathrm{max}) \in \N^2$, i.e.,
  $$\verb|HFD| (f,N,k_{\mathrm{max}}) = \mathrm{dim}_B(\Gamma).$$
  \end{enumerate}
\end{proposition}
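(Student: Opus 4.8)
The plan is to treat the three parts in order, since part~(3) will reduce, after an explicit computation, to parts~(1) and~(2). For part~(1), I would note that a constant function gives $X_N(j)=c$ for all $j$, so every increment $|X_N(m+ik)-X_N(m+(i-1)k)|$ vanishes. Hence $V_{N,k,m}=0$, forcing $L_m(k)=0$ and $L(k)=0$ for every $k$. Consequently the index set $\mathcal{I}$ is empty, so $|\mathcal{I}|\leq 1$ and the algorithm returns $D=1$ by definition.

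For part~(2), the hypothesis $L(k)=c\,k^{-D}$ (with $c>0$) gives $\log L(k)=\log c+D\log\frac{1}{k}$, so every point of $\mathcal{Z}$ lies on the line $y=\log c+Dx$, where $x=\log\frac{1}{k}$ and $y=\log L(k)$. Writing $y_i=\log c+D x_i$ yields $\overline{y}=\log c+D\overline{x}$, hence $y_i-\overline{y}=D(x_i-\overline{x})$. Substituting into the least-squares slope formula~\eqref{slope}, the numerator becomes $D\sum_i(x_i-\overline{x})^2$ and cancels against the denominator, returning exactly $D$. The denominator is nonzero because $|\mathcal{I}|\geq 2$ forces at least two distinct abscissae $\log\frac{1}{k}$ (the $k\in\mathcal{I}$ being distinct).

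For part~(3), I would write $f(x)=\alpha x+\beta$, so that $X_N(j)=\alpha\frac{j-1}{N-1}+\beta$ and every increment equals $|\alpha|\frac{k}{N-1}$, independently of $i$ and $m$. The sum defining $V_{N,k,m}$ therefore collapses to a count times a fixed value, $V_{N,k,m}=\lceil\frac{N-m}{k}\rceil\,|\alpha|\frac{k}{N-1}$; multiplying by $C_{N,k,m}$ cancels the factors $\lceil\frac{N-m}{k}\rceil$, $k$, and $N-1$, leaving $C_{N,k,m}V_{N,k,m}=|\alpha|$ and hence $L_m(k)=\frac{|\alpha|}{k}$ for every $m$. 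Averaging over $m$ gives $L(k)=\frac{|\alpha|}{k}\propto k^{-1}$. If $\alpha=0$ this is case~(1); if $\alpha\neq 0$ then $L(k)\neq 0$ for all $k$, so $\mathcal{I}=\{1,\dots,k_\mathrm{max}\}$, and either $k_\mathrm{max}=1$ (algorithm returns $1$ directly) or $k_\mathrm{max}\geq 2$, whence part~(2) with $D=1$ applies. In all cases the Higuchi fractal dimension equals $1$. Finally, $\Gamma$ is a line segment, hence rectifiable, so the number of $\delta$-boxes needed to cover it grows like $1/\delta$ and $\mathrm{dim}_B(\Gamma)=1$, matching the computed value. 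The only bookkeeping of substance is this cancellation: one must track the ceiling $\lceil\frac{N-m}{k}\rceil$ and confirm it cancels exactly between $C_{N,k,m}$ and $V_{N,k,m}$, a cancellation that hinges precisely on the increments of an affine function being constant so that the sum defining $V_{N,k,m}$ degenerates to a multiple of that count.
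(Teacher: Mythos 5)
Your proposal is correct, and its overall structure coincides with the paper's proof: part (1) via vanishing increments and $\mathcal{I}=\emptyset$, and part (3) via the cancellation $C_{N,k,m}V_{N,k,m}=|\alpha|$, hence $L(k)=\frac{|\alpha|}{k}\propto k^{-1}$, followed by reduction to parts (1) and (2). The one place you proceed differently is part (2): the paper observes that any two points of $\mathcal{Z}$ determine a line of slope $D$ and then simply asserts that a least-squares line through collinear points inherits that slope, whereas you substitute $y_i-\overline{y}=D(x_i-\overline{x})$ directly into the slope formula \eqref{slope} and cancel, also noting that the denominator is nonzero because $|\mathcal{I}|\geq 2$ forces two distinct abscissae $\log\frac{1}{k}$. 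Your version proves the step the paper leaves implicit, so it is the tighter argument. You are also more careful in part (3): you explicitly treat the edge case $k_\mathrm{max}=1$, where $|\mathcal{I}|=1$ and the algorithm returns $1$ by definition rather than through part (2) — a case the paper's appeal to $L(k)\propto k^{-1}$ silently skips — and you justify $\mathrm{dim}_B(\Gamma)=1$ for a line segment by rectifiability, where the paper only gestures at the definition of box-counting dimension. Both routes yield the same result; yours trades the paper's geometric shortcut for a fully explicit computation.
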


\begin{proof}
(1) If $f(x)=c$ for all $x \in [0,1]$ and some $c \in \R$ then $V_{N,k,m} = 0,$ for all $k \in \{ 1, \dots, k_\mathrm{max} \}$ and for all $m \in \{ 1, \dots, k \}$. Therefore, $\frac{1}{k}\sum\limits_{m=1}^k L_m(k) = 0 = L(k)$. Hence, $\mathcal{I} = \emptyset$ which implies $\verb|HFD| (f,N,k_{\mathrm{max}}) = 1$.

(2) Assume $L(k) \propto k^{-D}$ for $k \in \mathcal{I}$. Then there exists a $c > 0$ such that $L(k) = ck^{-D}$ for all $k \in \mathcal{I}$. Since $|\mathcal{I}| \geq 2$, there exist $k_1, k_2 \in \mathcal{I}$ with $k_1 \neq k_2$. Without loss of generality we may assume that $k_1 < k_2$. It follows that
\begin{equation*}
\frac{\log(L(k_2)) - \log(L(k_1))}{\log(\frac{1}{k_2}) - \log (\frac{1}{k_1})} = D.
\end{equation*}
Hence, any line through two consecutive points in $\mathcal{Z}$ has slope $D$. Therefore, a linear regression line through all points in $\mathcal{Z}$ has slope $D$ as well. All in all, we obtain $\verb|HFD| (f,N,k_{\mathrm{max}}) = D$.

(3) If $f$ is constant then $\mathcal{I} = \emptyset$ and therefore $\verb|HFD| (f,N,k_{\mathrm{max}}) = 1$. If $f$ is not constant then there exists an $a \neq 0$ and a $b \in \R$ such that $f(x) = ax+b$. Consequently,
\begin{equation*}
V_{N,k,m} = \sum_{i=1}^{\left \lceil \frac{N-m}{k} \right \rceil} |a| \left | \frac{m+ik-1}{N-1} - \frac{m+(i-1)k-1}{N-1} \right | = \frac{|a|\left \lceil \frac{N-m}{k} \right \rceil}{N-1} k
\end{equation*}
which implies
\begin{equation*}
L_m(k) = \frac{|a|}{k} = L(k).
\end{equation*}
Therefore, $L(k) \propto k^{-1}$ and the assertion follows from \eqref{defboxdim}.
\end{proof}

The third part of Proposition \ref{basics} shows that the normalization factor $C_{N,k,m}$ cannot be dropped without modifications since otherwise the computed dimension of an affine function could already differ from 1. But for an affine function the graph is a straight line and therefore the simplest example of a geometric object with box-counting dimension 1. We discuss the geometric meaning of the constants $C_{N,k,m}$ in more detail in Section 4.

\section{Relation to Functions of Bounded Variation}

In the original paper by Higuchi \cite{higuchi} and many other papers where the Higuchi method is applied to data sets, the values $L_m(k)$ are identified as the normalized lengths of a curve generated by the sub-time series
\begin{equation*}
X(m), X(m+k), X(m+2k), \dots, X \left ( m+\left\lceil \frac{N-m}{k} \right\rceil k \right ),
\end{equation*}
with $m \in \{1, \ldots, k\}$. In fact, the sum
\begin{equation*}
\sum_{i=1}^{\left\lceil \frac{N-m}{k} \right\rceil} \left|X_N(m+ik)-X_N(m+(i-1)k)\right|
\end{equation*}
does \emph{not} measure the Euclidean length of the curve $t \mapsto (t,f(t))$ but we can regard it as an \emph{approximation} for the total variation of $f$. Let us recall the definition of the total variation of a function $f : [a,b] \to \R$.

\begin{definition}[Total variation]
Let $f : [a,b] \to \R$ be a function and $a,b\in \R$ with $a<b$. For a partition $P=\{t_0, \dots, t_N \}$ of $[a,b]$ with $a = t_0 < t_1 < \cdots < t_N = b$, define
\begin{equation*}
V_P(f) \coloneqq \sum_{i=1}^N |f(t_i)-f(t_{i-1})|
\end{equation*}
and
\begin{equation*}
|P| \coloneqq \max_{i = 1, \dots, N} |t_i - t_{i-1}|.
\end{equation*}
We call $|P|$ the mesh size of $P$. Further set
\begin{equation*}
V_a^b(f) \coloneqq \sup_P V_P(f)
\end{equation*}
where the supremum is taken over all partitions of $[a,b]$. Then $V_a^b(f)$ is called the total variation of $f$.
If $V_a^b(f) < \infty$ then $f$ is said to have bounded variation on $[a,b]$ and we define
\begin{equation*}
BV[a,b] \coloneqq \{ f : [a,b] \to \R \ | \ V_a^b(f) < \infty \}.
\end{equation*}
\end{definition}

As an input for the Higuchi algorithm one usually uses non-constant functions since otherwise $L(k) = 0$ for all $k$ and therefore there is no non-horizontal regression line through the data set $\mathcal{Z}$. We can express this condition in terms of the total variation of $f$.

\begin{lemma}\label{lem3.2}
For a function $f : [a,b] \to \R$, $V_a^b(f) = 0$ if and only if $f$ is constant on $[a,b]$.
\end{lemma}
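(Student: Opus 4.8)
The plan is to establish the equivalence by proving each implication separately, since the statement is an ``if and only if.'' The direction that a constant function has zero total variation is immediate: if $f(x)=c$ for all $x\in[a,b]$, then for every partition $P=\{t_0,\dots,t_N\}$ of $[a,b]$ each increment satisfies $|f(t_i)-f(t_{i-1})|=|c-c|=0$, so that $V_P(f)=0$; taking the supremum over all partitions then yields $V_a^b(f)=0$.

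For the converse I would assume $V_a^b(f)=0$ and aim to show $f(x)=f(a)$ for every $x\in[a,b]$. The key observation is that, since $V_a^b(f)$ is the supremum of a family of nonnegative partition sums, the hypothesis $V_a^b(f)=0$ forces every individual sum $V_P(f)$ to vanish. First I would fix an arbitrary $x\in(a,b)$ and apply this to the three-point partition $P=\{a,x,b\}$, which gives $|f(x)-f(a)|+|f(b)-f(x)|=V_P(f)\le V_a^b(f)=0$. Because both summands are nonnegative, each must be zero, and in particular $f(x)=f(a)$. The boundary case $x=b$ is handled analogously with the two-point partition $P=\{a,b\}$, yielding $|f(b)-f(a)|=0$. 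Combining these conclusions shows $f\equiv f(a)$ on $[a,b]$, i.e.\ $f$ is constant.

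There is no genuine obstacle in this lemma; the only step that deserves a moment's care is the logical point that a vanishing supremum of nonnegative quantities forces each admissible partition sum to be zero, so that one may read off $f(x)=f(a)$ simply by choosing, for each target point $x$, a partition having $x$ as one of its nodes. Once that observation is recorded, the remainder is a direct computation with the definition of $V_P$.
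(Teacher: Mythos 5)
Your proof is correct and follows the same route the paper intends: the paper's proof is simply the remark that the claim ``follows directly from the definition of $V_a^b(f)$,'' and your argument is exactly that definitional argument spelled out (every partition sum vanishes for a constant $f$; conversely, a vanishing supremum of nonnegative sums forces $V_P(f)=0$ for every $P$, and choosing partitions through each point $x$ gives $f(x)=f(a)$). No issues.
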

\begin{proof}
The statements follow directly from the definition of $V_a^b(f)$.
\end{proof}
It is known that if $f \in BV[a,b]$ then $\mathrm{dim}_B(\mathrm{graph}(f)) = 1$. See, for instance,  \cite{falconer1} for a proof. 

In the following, we examine whether or not the Higuchi algorithm yields the correct dimension of graph($f$) if $f$ is continuous and of bounded variation. To do so we first show a relation between the values $V_{N,k,m}$ defined in Algorithm \ref{algo} and the variation of $f$. This relation is based on the following result.

\begin{theorem}\label{thm3.3}
Let $f \in C[a,b] \cap BV[a,b]$ and let $(P_n)_{n \in \N}$ be a sequence of partitions of $[a,b]$ such that $|P_n| \to 0$ as $n \to \infty$. Then
\begin{equation*}
\lim_{n \to \infty} V_{P_n}(f) = V_a^b(f).
\end{equation*}
\end{theorem}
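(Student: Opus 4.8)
The plan is to establish the two one-sided estimates $\limsup_{n\to\infty} V_{P_n}(f) \le V_a^b(f)$ and $\liminf_{n\to\infty} V_{P_n}(f) \ge V_a^b(f)$, which together yield the claim. The first is immediate: by definition $V_a^b(f) = \sup_P V_P(f)$, so $V_{P_n}(f) \le V_a^b(f)$ for every $n$, and since $f \in BV[a,b]$ the right-hand side is finite. Hence $\limsup_{n\to\infty} V_{P_n}(f) \le V_a^b(f)$ with no work at all. The whole difficulty is therefore concentrated in the lower bound.

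For the lower bound, fix $\varepsilon > 0$. By definition of the supremum I would first choose a single partition $Q = \{s_0, \dots, s_M\}$ of $[a,b]$ with $V_Q(f) > V_a^b(f) - \varepsilon$. The crucial point is that $Q$ is now frozen, so $M$ is a fixed constant independent of $n$. Next I would compare an arbitrary $P_n$ with the common refinement $P_n \cup Q$. Since $P_n \cup Q$ refines both partitions, the triangle inequality gives the monotonicity $V_{P_n}(f) \le V_{P_n \cup Q}(f)$ and $V_Q(f) \le V_{P_n \cup Q}(f)$. The key estimate to prove is that this refinement does not increase the variation by much, namely
\begin{equation*}
V_{P_n \cup Q}(f) - V_{P_n}(f) \le 2M\,\omega(|P_n|),
\end{equation*}
where $\omega(\delta) := \sup\{ |f(x) - f(y)| : x,y \in [a,b], \ |x-y| \le \delta \}$ denotes the modulus of continuity of $f$.

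To obtain this estimate I would pass from $P_n$ to $P_n \cup Q$ by inserting the interior points $s_1, \dots, s_{M-1}$ of $Q$ one at a time. Each inserted point $c$ lies in some subinterval $[t_{j-1},t_j]$ of the current partition that is contained in an original $P_n$-subinterval, hence of length at most $|P_n|$; splitting it replaces the term $|f(t_j)-f(t_{j-1})|$ by $|f(c)-f(t_{j-1})| + |f(t_j)-f(c)|$, so the increase is bounded by $|f(c)-f(t_{j-1})| + |f(t_j)-f(c)| \le 2\,\omega(|P_n|)$. Summing over the at most $M$ insertions gives the displayed estimate. Combining everything yields $V_{P_n}(f) \ge V_{P_n\cup Q}(f) - 2M\,\omega(|P_n|) \ge V_Q(f) - 2M\,\omega(|P_n|) > V_a^b(f) - \varepsilon - 2M\,\omega(|P_n|)$. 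Because $f$ is continuous on the compact interval $[a,b]$ it is uniformly continuous, so $\omega(|P_n|) \to 0$ as $|P_n| \to 0$; letting $n \to \infty$ with $M$ fixed gives $\liminf_{n\to\infty} V_{P_n}(f) \ge V_a^b(f) - \varepsilon$, and then $\varepsilon \to 0$ completes the argument.

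The main obstacle is precisely the refinement estimate, and the reason it works is uniform continuity: the partitions $P_n$ are \emph{not} assumed to be nested, so one cannot simply invoke monotone convergence along a refining sequence. Instead one must control, uniformly in $n$, how much the fixed near-optimal partition $Q$ can raise the variation when merged into $P_n$, and this is exactly where continuity enters. It is worth noting that the hypothesis $f \in C[a,b]$ cannot be dropped: for a function with an interior jump that a fine partition $P_n$ straddles without sampling, $V_{P_n}(f)$ can remain bounded away from $V_a^b(f)$, so the statement genuinely fails for discontinuous functions of bounded variation.
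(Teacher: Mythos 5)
Your proof is correct and follows essentially the same route as the paper's: both arguments fix a near-optimal partition, pass to its common refinement with the arbitrary fine partition, use monotonicity of $V_P(f)$ under refinement, and invoke uniform continuity of $f$ to show that the refinement exceeds the fine partition's variation by an arbitrarily small amount. The only difference is bookkeeping --- you bound the refinement excess by inserting the finitely many fixed points one at a time, getting $2M\,\omega(|P_n|)$ in total, whereas the paper decomposes the refined sum explicitly and uses the uniform-continuity threshold $\varepsilon/(2n)$ (which also forces its technical requirement that the fine partition contain at least two points in each subinterval, a condition your insertion argument avoids).
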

\begin{proof}
See the Appendix.
\end{proof}

\begin{proposition}\label{prop3.4}
Let $f \in C[0,1] \cap BV[0,1]$ and $V_{N,k,m}$ be defined as in the Higuchi algorithm (with $m,k \in \N$ fixed). Then
\begin{equation*}
\lim_{N \to \infty} V_{N,k,m} = V_0^1(f).
\end{equation*}
\end{proposition}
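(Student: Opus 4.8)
The plan is to recognize $V_{N,k,m}$ as the variation $V_{P_N}(f)$ of $f$ along a partition $P_N$ of $[0,1]$ whose mesh tends to $0$, and then to apply Theorem~\ref{thm3.3}. For fixed $m$ and $k$, introduce the abscissae
\[
\tau_i \coloneqq \frac{m+ik-1}{N-1}, \qquad i = 0,1,\dots,L, \quad L \coloneqq \left\lceil \tfrac{N-m}{k}\right\rceil ,
\]
so that $X_N(m+ik) = f(\tau_i)$ and hence $V_{N,k,m} = \sum_{i=1}^{L}\lvert f(\tau_i)-f(\tau_{i-1})\rvert$. First I would record the elementary facts that, with $m,k$ fixed and $N\to\infty$, one has $\tau_0 = \frac{m-1}{N-1}\to 0$, the interior gaps are constant and equal to $\tau_i-\tau_{i-1}=\frac{k}{N-1}\to 0$, and $1\le \tau_L < 1+\frac{k}{N-1}$, from which $1-\frac{k}{N-1}\le \tau_{L-1}<1$; in particular $\tau_{L-1},\tau_L\to 1$.

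Next I would promote $\{\tau_0,\dots,\tau_{L-1}\}$ to a bona fide partition of $[0,1]$ by adjoining the endpoints, $P_N \coloneqq \{0,\tau_0,\tau_1,\dots,\tau_{L-1},1\}$ (deleting the repetition of $0$ when $m=1$). Since $m\le k$, the two appended boundary subintervals have lengths $\tau_0\le\frac{k-1}{N-1}$ and $1-\tau_{L-1}\le\frac{k}{N-1}$, so $\lvert P_N\rvert = \frac{k}{N-1}\to 0$ and Theorem~\ref{thm3.3} yields $V_{P_N}(f)\to V_0^1(f)$. Comparing the two sums gives the identity
\[
V_{N,k,m}-V_{P_N}(f) = \lvert f(\tau_L)-f(\tau_{L-1})\rvert - \lvert f(\tau_0)-f(0)\rvert - \lvert f(1)-f(\tau_{L-1})\rvert ,
\]
and I would show that each of the three terms on the right vanishes as $N\to\infty$: the second because $\tau_0\to 0$, the third because $\tau_{L-1}\to 1$, and the first because both $\tau_L,\tau_{L-1}\to 1$, in every case by continuity of $f$. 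Combining this with $V_{P_N}(f)\to V_0^1(f)$ delivers the claim.

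The hard part will be the right-hand endpoint. Because the ceiling forces $\tau_L\ge 1$, with strict inequality exactly when $k\nmid(N-m)$, the topmost sample $f(\tau_L)$ is taken at an abscissa $\tau_L\in[1,1+\frac{k}{N-1})$ that can lie just outside $[0,1]$, so that $V_{N,k,m}$ is, strictly speaking, defined only after fixing a convention for $f$ there. I would resolve this by working with the continuous extension of $f$ near $1$, so that the boundary term $\lvert f(\tau_L)-f(\tau_{L-1})\rvert\to 0$ follows from continuity at $1$ (as a sanity check, for the cofinal set of $N$ with $k\mid(N-m)$ one has $\tau_L=1$ exactly and no extension is needed); the remainder of the argument — mesh refinement together with Theorem~\ref{thm3.3} — is then routine.
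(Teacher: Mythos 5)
Your proof is correct and follows essentially the same route as the paper: identify $V_{N,k,m}$ with the variation of $f$ over a partition of $[0,1]$ whose mesh is $\mathcal{O}(\frac{k}{N-1})$, invoke Theorem~\ref{thm3.3}, and make the boundary discrepancy terms vanish by continuity of $f$ at $0$ and at $1$. You are in fact more careful than the paper on one point: the paper's partition contains the point $r_N = \frac{m+\lceil\frac{N-m}{k}\rceil k-1}{N-1} \geq 1$ (so it is not literally a partition of $[0,1]$ unless $k \mid N-m$) and its proof writes $|f(1)-f(r_N)|$ without comment, whereas you keep $\tau_L$ out of the partition and explicitly fix a convention (continuous extension of $f$ past $1$) that makes $V_{N,k,m}$ well defined when $k \nmid (N-m)$ — which is the honest way to handle the ceiling in the algorithm's definition.
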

\begin{proof}
Define partitions
\begin{equation*}
P_N \coloneqq  \left\{ \frac{m+ik-1}{N-1} \;\bigg\vert\; i = 1, \dots, \left\lceil\frac{N-m}{k} \right\rceil \right\}  \cup \{0,1\}.
\end{equation*}
Since $|P_N| = \mathcal{O}(\frac{1}{N})$ it follows from Theorem \ref{thm3.3} that
\begin{equation*}
\lim_{N \to \infty} V_{P_N}(f) = V_0^1(f).
\end{equation*}
Define $l_N := \frac{m-1}{N-1}$ and $r_N := \frac{m+\left\lceil\frac{N-m}{k} \right\rceil k-1}{N-1}$. Then
\begin{equation*}
V_{P_N}(f) = V_{N,k,m} + {|f(0)-f(l_N)| + |f(1)-f(r_N)|}.
\end{equation*}
Set $e_N:= |f(0)-f(l_N)| + |f(1)-f(r_N)|$.  Since $l_N \to 0$ and $r_N \to 1$ as $N \to \infty$, we have  $e_N \to 0$ by the continuity of $f$. Consequently,
\begin{equation*}
\lim_{N \to \infty} V_{P_N}(f) = \lim_{N \to \infty} V_{N,k,m} = V_0^1(f).\qedhere
\end{equation*}
\end{proof}

The preceding proposition yields the next result.

\begin{theorem}
Let $f \in C[0,1] \cap BV[0,1]$. Then
\begin{equation*}
\verb|HFD|(f,N,2) \to 1 = \mathrm{dim}_B(\mathrm{graph}(f))\quad\text{as $N\to\infty$}.
\end{equation*}
In other words, $\verb|HFD|(f,N,k_\mathrm{max})$ converges to $\mathrm{dim}_B(\mathrm{graph}(f))$ if we choose an admissible sequence $(N,k_\mathrm{max}) \subset \N^2$ in such a way that $k_\mathrm{max}=2$ is fixed and $N$ goes to infinity.
\end{theorem}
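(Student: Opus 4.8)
The plan is to exploit the fact that with $k_{\max}=2$ the index set $\mathcal{I}$ is a subset of $\{1,2\}$, so the data set $\mathcal{Z}$ contains at most two points. If $\mathcal{I}=\{1,2\}$, then by the two-point specialization of the least-squares formula \eqref{slope} (in which $\overline{x},\overline{y}$ are just the midpoints of the two abscissae and ordinates), the regression slope is simply the slope of the secant line through these two points. Thus the entire statement reduces to tracking the two lengths $L(1)$ and $L(2)$ as $N\to\infty$.

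First I would dispose of the trivial case. If $f$ is constant then $\mathrm{dim}_B(\mathrm{graph}(f))=1$ and, by Proposition \ref{basics}(1), $\verb|HFD|(f,N,2)=1$ for every admissible $N$, so the limit holds trivially. Hence assume $f$ is non-constant; by Lemma \ref{lem3.2} we then have $V:=V_0^1(f)>0$, and since $f\in BV[0,1]$ the result cited after Lemma \ref{lem3.2} gives $\mathrm{dim}_B(\mathrm{graph}(f))=1$.

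Next I would write out the two lengths explicitly. Unwinding the definitions in Algorithm \ref{algo} gives $L(1)=C_{N,1,1}V_{N,1,1}$ and $L(2)=\tfrac14\bigl(C_{N,2,1}V_{N,2,1}+C_{N,2,2}V_{N,2,2}\bigr)$. For the variation factors, Proposition \ref{prop3.4} yields $V_{N,k,m}\to V$ as $N\to\infty$ for each fixed $(k,m)$. For the normalization factors, the elementary sandwich $\frac{N-1}{N-m+k}<C_{N,k,m}\le\frac{N-1}{N-m}$, coming from $N-m\le\lceil\frac{N-m}{k}\rceil k<N-m+k$, shows $C_{N,k,m}\to 1$ with $k,m$ fixed. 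Combining these, $L(1)\to V$ and $L(2)\to\tfrac14(V+V)=\tfrac{V}{2}$.

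Finally, since $V>0$ both limits are strictly positive, so there is an $N_0\ge 3$ (the bound $N_0\ge 3$ also ensuring that $k_{\max}=2$ is admissible) such that $L(1),L(2)>0$, and hence $\mathcal{I}=\{1,2\}$, for all $N\ge N_0$. For such $N$ the two regression points are $(0,\log L(1))$ and $(-\log 2,\log L(2))$, and the secant slope equals $\frac{\log L(1)-\log L(2)}{\log 2}=\log_2\!\frac{L(1)}{L(2)}$. Letting $N\to\infty$ and using continuity of the logarithm gives $L(1)/L(2)\to V/(V/2)=2$, whence $\verb|HFD|(f,N,2)\to\log_2 2 = 1 = \mathrm{dim}_B(\mathrm{graph}(f))$. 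The argument is essentially bookkeeping on top of Proposition \ref{prop3.4}; the only point that genuinely requires the hypotheses is the eventual stabilization $\mathcal{I}=\{1,2\}$, which rests on $V>0$ (Lemma \ref{lem3.2}) and is precisely why non-constancy matters here.
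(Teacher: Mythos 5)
Your proof is correct and follows essentially the same route as the paper's: both reduce the problem to the two lengths $L(1)$ and $L(2)$, use Lemma \ref{lem3.2} and Proposition \ref{prop3.4} (plus $C_{N,k,m}\to 1$) to show these are eventually positive so that $\mathcal{I}=\{1,2\}$, and then identify $\verb|HFD|(f,N,2)$ with the secant slope through the two log-log points, which tends to $1$. The only differences are cosmetic refinements on your part (the sandwich bound proving $C_{N,k,m}\to 1$, which the paper merely asserts, and the remark that admissibility forces $N\geq 3$), and a slightly different arrangement of the final limit computation ($L(1)/L(2)\to 2$ versus the paper's $d_N = 1 + o(1)$).
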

\begin{proof}
Since $k_\mathrm{max}=2$, we only have to consider the values $L(1)$ and $L(2)$. We have
\begin{equation*}
L(1) = V_{N,1,1} \ \ \mathrm{and} \ \ L(2) = \tfrac{1}{4}(C_{N,2,1}V_{N,2,1} + C_{N,2,2}V_{N,2,2}).
\end{equation*}
If $f$ is constant then the result follows from Proposition \ref{basics}. If $f$ is not constant then $V_0^1(f) > 0$ by Lemma \ref{lem3.2}. Furthermore, we have $\lim\limits_{N\to\infty} V_{N,k,m} = V_0^1(f) > 0$ by Proposition \ref{prop3.4} and $\lim\limits_{N\to\infty} C_{N,k,m} = 1$. Therefore, there exists an $\overline{N}\in\N$ such that 
\begin{equation*}
L(1),L(2) > 0, \quad \forall N \geq \overline{N}.
\end{equation*}
The slope $d_N$ of a linear regression line through
\begin{equation*}
\mathcal{Z} = \left\{ (\log 1, \log L(1)), (\log \tfrac{1}{2}, \log L(2)) \right\}
\end{equation*} 
is given by
\begin{equation*}
d_N = \frac{\log L(2) - \log L(1)}{\log \frac{1}{2} - \log 1} = 1 + {\frac{\log (\frac{1}{2}(C_{N,2,1}V_{N,2,1} + C_{N,2,2}V_{N,2,2}) - \log V_{N,1,1}}{\log \frac{1}{2}}}.
\end{equation*}
The fraction on the right-hand side goes to zero as $N\to\infty$. But $d_N$ is exactly $\verb|HFD|(f,N,2)$ and the result follows. 
\end{proof}

\begin{example}
The function $f_c(x) = x^2\sin(\frac{c}{x}), \ f(0)=0, \ c>0,$ satisfies $f_c \in C[0,1] \cap BV[0,1]$. Figure \ref{fig2} shows the convergence of $\verb|HFD|(f_c,N,2)$ to the value $1$.
\end{example}

\begin{figure}
\centering
\includegraphics[scale=0.43]{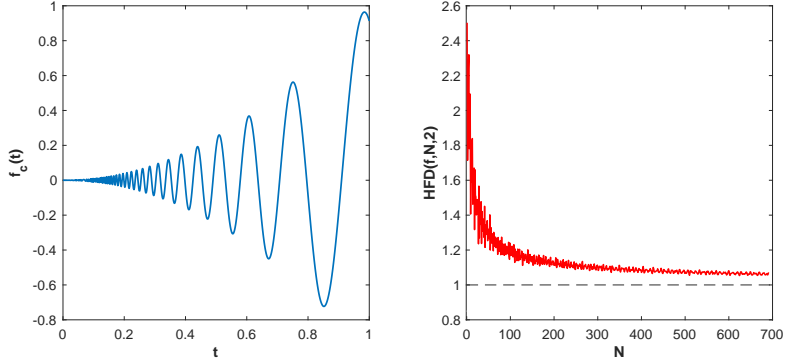}
\caption{Function $f_c(x) = x^2\sin(\frac{c}{x})$ with $c=20$ and the output of the Higuchi algorithm  for $N=2, \dots, 700$.}\label{fig2}
\end{figure}

\section{The Geometric Idea Behind the Higuchi Method}

Consider squares of the form
\begin{equation*}
S_{m_1,m_2} = [m_1\delta, (m_1+1)\delta] \times [m_2\delta, (m_2+1)\delta] \subset \R^2,
\end{equation*}
where $m_1,m_2 \in \Z$ and $\delta > 0$. We call a collection of such squares a \emph{$\delta$-mesh} of $\R^2$. For a nonempty bounded subset $F \subset \R^2$, we define the quantity $M_\delta(F)$ to represent the number of $\delta$-mesh squares that intersect $F$, i.e., 
\begin{equation}\label{mdel}
M_\delta(F) = | \{ (m_1,m_2) \in \Z^2 \ | \ S_{m_1,m_2} \cap F \neq \emptyset \} |.
\end{equation}
Using $\delta$-meshes, one obtains an equivalent definition for the box-counting dimension which turns out to be more convenient for a numerical approach.

\begin{theorem}
Let $F \subset \R^2$ be nonempty and bounded and let $M_\delta(F)$ be the number of intersecting $\delta$-mesh squares as defined in \ref{mdel}. Then
\begin{equation}\label{meshdim}
\mathrm{dim}_B(F) = \lim_{\delta \to 0+} \frac{\log M_\delta(F)}{\log \frac{1}{\delta}},
\end{equation}
provided this limit exists.
\end{theorem}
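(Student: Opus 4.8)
The plan is to show that the mesh-counting function $M_\delta(F)$ is comparable to the covering number $N_\delta(F)$ from Definition \ref{defboxdim}, up to a multiplicative constant that is independent of $\delta$. Once this is established, the limit \eqref{meshdim} agrees with \eqref{defboxdim} because a constant factor disappears after taking logarithms and dividing by $\log\frac{1}{\delta}$.

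First I would record the easy inequality $N_\delta(F)\leq M_\delta(F)$. The $\delta$-mesh squares tile $\R^2$, so every point of $F$ lies in at least one mesh square; hence the $M_\delta(F)$ mesh squares that intersect $F$ already cover $F$. Since these are boxes of side length $\delta$, and $N_\delta(F)$ is by definition the least number of such boxes needed to cover $F$, it follows that $N_\delta(F)\leq M_\delta(F)$.

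Next I would prove the reverse bound $M_\delta(F)\leq 9\,N_\delta(F)$. The key elementary fact is that a single closed box of side $\delta$ can meet only boundedly many mesh squares: projecting onto a coordinate axis, an interval of length $\delta$ meets at most three of the mesh intervals $[m\delta,(m+1)\delta]$, so a box of side $\delta$ meets at most $3^2=9$ mesh squares. Now fix an optimal cover of $F$ by $N_\delta(F)$ boxes of side $\delta$. Every mesh square that intersects $F$ contains a point $x\in F$, and this $x$ lies in one of the covering boxes; that box and the mesh square therefore share the point $x$ and so intersect. Consequently each of the $N_\delta(F)$ covering boxes accounts for at most $9$ intersecting mesh squares, which yields $M_\delta(F)\leq 9\,N_\delta(F)$.

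Finally I would combine the two inequalities, take logarithms, and divide by $\log\frac{1}{\delta}$ to obtain, for all $0<\delta<1$,
\begin{equation*}
\frac{\log N_\delta(F)}{\log\frac{1}{\delta}}\leq\frac{\log M_\delta(F)}{\log\frac{1}{\delta}}\leq\frac{\log 9}{\log\frac{1}{\delta}}+\frac{\log N_\delta(F)}{\log\frac{1}{\delta}},
\end{equation*}
where the logarithms are well defined because $M_\delta(F)\geq 1$ as $F\neq\emptyset$. As $\delta\to0^+$ the term $\log 9/\log\frac{1}{\delta}$ vanishes, so the two quotients have identical limiting behaviour: one converges if and only if the other does, and in that case to the same value. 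In particular, whenever the limit in \eqref{meshdim} exists it equals $\mathrm{dim}_B(F)$. The only step requiring genuine care is the combinatorial count — pinning down the constant and correctly treating mesh squares that a covering box merely touches along a shared boundary — but this is elementary, and no estimate beyond the squeeze argument is needed.
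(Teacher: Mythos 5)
Your proof is correct and complete. Note that the paper itself supplies no argument for this theorem: its proof is simply a citation to \cite{falconer1}, pp.~41--43, so there is no internal proof to compare yours against; what you have written is essentially the standard argument from that reference, namely a two-sided comparison $N_\delta(F)\leq M_\delta(F)\leq 9\,N_\delta(F)$ followed by a squeeze after taking logarithms. Both inequalities are justified correctly: the intersecting mesh squares themselves form a cover of $F$ by boxes of side $\delta$, and your projection count (a closed interval of length $\delta$ meets at most three consecutive mesh intervals, the third occurring only through shared endpoints) gives the constant $3^2=9$ while properly handling mesh squares touched only along an edge or corner. One difference in flavor worth noting: the cited source runs the comparison through covers by arbitrary sets of small diameter, which drags factors of $\sqrt{2}$ through the estimates, whereas you compare $M_\delta$ directly with the paper's own definition of $N_\delta$ (covers by boxes of side $\delta$); this makes your version cleaner and better adapted to the present setting. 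Your closing observation is also the right reading of the ``provided this limit exists'' clause: since the two quotients differ by at most $\log 9/\log\frac{1}{\delta}\to 0$, either limit exists if and only if the other does, and in that case they agree.
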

\begin{proof}
See, for instance, \cite{falconer1}, pp. 41--43.
\end{proof}

For $F \subset \R^2$ and $\delta > 0$ the area $\A_\delta$ of an intersecting $\delta$-mesh is given by
\begin{equation}\label{area}
\A_\delta = \delta^2M_\delta(F).
\end{equation}

Inserting \eqref{area} into \eqref{meshdim}, we can write the box-counting dimension of $F$ as
\begin{equation}\label{twominus}
\mathrm{dim}_B(F) = \lim_{\delta \to 0+} 2 - \frac{\log \A_\delta}{\log \delta}.
\end{equation}

At this point the question arises, what is a good approximation of the area $\A_\delta$ when $F$ is the graph of a bounded function $f : [0,1] \to \R$?

To this end, we start by taking numbers $N,k \in \N$ with $N \gg 1$ and $k \ll N$ such that $\frac{k}{N-1} \ll 1$. Set $\delta = \frac{k}{N-1}$. For subintervals $[x,x+\frac{k}{N-1}]$ of $[0,1]$ of length $\frac{k}{N-1}$ an approximation of an intersecting $\frac{k}{N-1}$-mesh of graph$(f|_{[x,x+\frac{k}{N-1}]})$ is given by
\begin{equation*}
\frac{k}{N-1} \left | f \left ( x+\frac{k}{N-1} \right ) - f(x) \right |.
\end{equation*}
Now for the whole graph of $f : [0,1] \to \R$, the sum
\begin{equation}\label{sum}
\frac{k}{N-1}\sum_{i=1}^{\left\lceil \frac{N-1}{k} \right\rceil} \left | f \left ( \frac{ik}{N-1} \right ) - f \left ( \frac{(i-1)k}{N-1} \right ) \right |
\end{equation}
approximates the area $\A_\delta$. This approach is shown in Figure \ref{fig3}. The bigger $N$ and the smaller $k$, the better the approximation.

\begin{figure}[h!]
\centering
\includegraphics[scale=0.5]{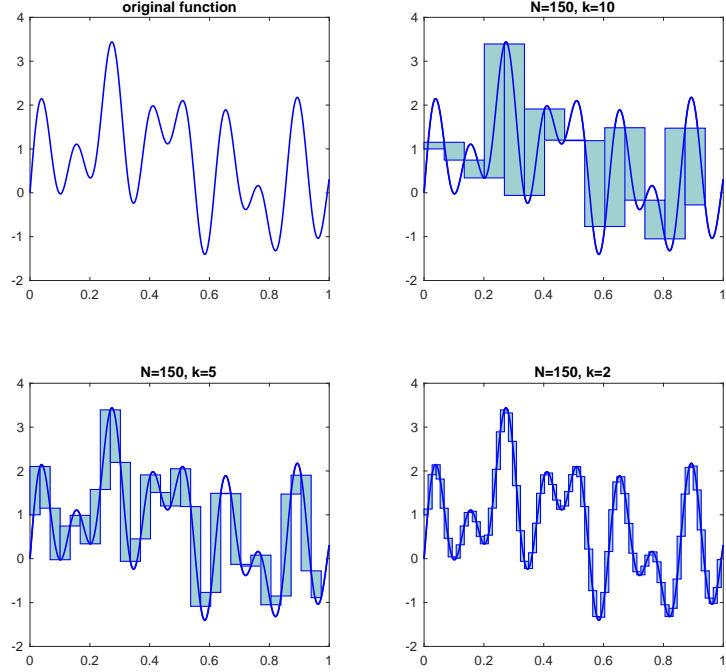}
\caption{Approximations of $\A_\delta$ with $k=2,5,10$ and $m=1$. The boxes represent the $\lceil \frac{N-m}{k} \rceil$ areas $\frac{k}{N-1}  \left| f ( \frac{ik}{N-1} ) - f \left ( \frac{(i-1)k}{N-1} \right )  \right|$ }\label{fig3}
\end{figure}

In \eqref{sum} we started on the very left of $[0,1]$ with $0$ and took $\lceil \frac{N-1}{k}\rceil$ steps to provide a value close to $\A_\delta$. We could also start at $\frac{1}{N-1}$ and approximate $\A_\delta$ by $\lceil\frac{N-2}{k} \rceil$ summands. It is reasonable to compute $k$ approximations of $\A_\delta$ with starting points
\begin{equation*}
0, \frac{1}{N-1}, \frac{2}{N-2}, \dots, \frac{k-1}{N-1}.
\end{equation*}
A starting point greater than $\frac{k-1}{N-1}$ would not be meaningful since otherwise a whole subinterval of length $\frac{k}{N-1}$ would be missing. Thus, we obtain $k$ values of the form
\begin{equation*}
\frac{k}{N-1}\sum_{i=1}^{\left \lceil \frac{N-m}{k} \right \rceil} \left | f \left ( \frac{m+ik-1}{N-1} \right ) - f \left ( \frac{m+(i-1)k-1}{N-1} \right ) \right |, \ \ m=1, \dots, k
\end{equation*}
which can be written as
\begin{equation*}
\frac{k}{N-1} V_{N,k,m}, \quad m=1, \dots, k,
\end{equation*}
where $V_{N,k,m}$ is defined exactly as in the Higuchi method. Note that the definition of $\frac{k}{N-1} V_{N,k,m}$ does not consider the graph of $f$ defined on the intervals 
\[
[0,{\frac{m-1}{N-1}}]\quad\text{and}\quad[{\frac{m+[\frac{N-m}{k}]k-1}{N-1}}, 1].
\]
Set $l := \frac{m-1}{N-1}$ and $r:= \frac{m+[\frac{N-m}{k}]k-1}{N-1}$. To obtain an even better approximation, we take these two subintervals into account and proceed in the following way.

The area of an intersecting $\delta$-mesh of $\mathrm{graph}(f|_{[l,r]})$ is approximately equal to $\frac{k}{N-1} V_{N,k,m}$. For a fractal function $f$, i.e., a function whose graphs exhibits fractal characteristics such as self-referentiality \cite{masso2}, the corresponding area of graph($f$) when defined on whole $[0,1]$ is approximately
\begin{equation*}
c \frac{k}{N-1} V_{N,k,m}
\end{equation*}
where $c$ is chosen in such a way that
\begin{equation*}
c \cdot \mathrm{length}([l,r]) = \mathrm{length}([0,1]) = 1,
\end{equation*}
where $\mathrm{length}(\cdot)$ denotes the length of an interval. The interval $[l,r]$ has length $\frac{\lceil\frac{N-m}{k}\rceil k}{N-1}$ which yields
\begin{equation*}
c = \frac{N-1}{\left \lceil \frac{N-m}{k} \right \rceil k}.
\end{equation*}
Hence, $c=C_{N,k,m}$, where $C_{N,k,m}$ is the normalization constant as defined in the Higuchi algorithm \cite{higuchi}. Finally, we obtain $k$ values 
\begin{equation*}
\frac{k}{N-1} C_{N,k,m} V_{N,k,m}, \quad m=1, \dots, k,
\end{equation*}
and their mean
\begin{equation*}
\widetilde{L}(\frac{k}{N-1}) \coloneqq \frac{1}{k} \sum_{m=1}^k \frac{k}{N-1} \,C_{N,k,m} V_{N,k,m} 
\end{equation*}
yields
\begin{equation*}
\A_{\frac{k}{N-1}} \approx \widetilde{L}\left(\frac{k}{N-1}\right).
\end{equation*}
Using \eqref{twominus} in combination with a linear regression line, we obtain
\begin{equation}\label{L}
\mathrm{dim}_B(\mathrm{graph}(f)) \approx 2 - L,
\end{equation}
where $L$ is the slope of a regression line through $k_\mathrm{max}$ data points
\begin{equation}\label{data}
\left\{\left( \log \frac{k}{N-1}, \log \widetilde{L}\left(\frac{k}{N-1}\right)\right) \,\Bigg\vert \,\, k = 1, \ldots, k_\mathrm{max}\right\}.
\end{equation}
This method yields a highly geometric approach for a numerical computation of the box-counting dimension. In fact, it is equivalent to the Higuchi method.
\begin{theorem}
Let $L$ be the slope of the regression line through the data \eqref{data}. Then $2-L = \verb|HFD| (f,N,k_{\mathrm{max}})$.
\end{theorem}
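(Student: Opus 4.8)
The plan is to show that the data set \eqref{data} and the Higuchi data set $\mathcal{Z}$ are related by an explicit affine change of coordinates in $\R^2$, and then to read off the slope relation from the elementary transformation behaviour of the least-squares slope \eqref{slope}.

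First I would record the algebraic identity linking $\widetilde{L}$ to the Higuchi length $L(k)$. Unwinding the definitions, $L(k) = \frac{1}{k}\sum_{m=1}^k L_m(k) = \frac{1}{k^2}\sum_{m=1}^k C_{N,k,m}V_{N,k,m}$, whereas $\widetilde{L}(\tfrac{k}{N-1}) = \frac{1}{N-1}\sum_{m=1}^k C_{N,k,m}V_{N,k,m}$. Comparing the two sums gives
\[
\widetilde{L}\Bigl(\tfrac{k}{N-1}\Bigr) = \frac{k^2}{N-1}\,L(k).
\]
Since $\frac{k^2}{N-1} > 0$, this already shows that $\widetilde{L}(\tfrac{k}{N-1}) \neq 0$ precisely when $L(k) \neq 0$, so both regressions run over the same index set $\mathcal{I}$. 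If $|\mathcal{I}| \leq 1$ the slope is undefined and both quantities are assigned the value $1$ by convention, so there is nothing to prove; I may therefore assume $|\mathcal{I}| \geq 2$.

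Next I would pass to logarithms. Writing $(u_k,v_k) := (\log\tfrac{1}{k}, \log L(k))$ for the points of $\mathcal{Z}$ and $(x_k,y_k) := (\log\tfrac{k}{N-1}, \log\widetilde{L}(\tfrac{k}{N-1}))$ for the points of \eqref{data}, the displayed identity together with $\log\tfrac{1}{k} = -\log k$ yields, for every $k \in \mathcal{I}$,
\[
x_k = -\,u_k - \log(N-1), \qquad y_k = v_k - 2u_k - \log(N-1).
\]
Thus the two point clouds differ only by an affine transformation of the plane.

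Finally I would compute the slope. Both additive constants cancel against the respective means, so $x_k - \overline{x} = -(u_k - \overline{u})$ and $y_k - \overline{y} = (v_k - \overline{v}) - 2(u_k - \overline{u})$. Substituting these into \eqref{slope}, the denominator of $L$ becomes $\sum (u_k - \overline{u})^2$, while the numerator splits as $-\sum (u_k - \overline{u})(v_k - \overline{v}) + 2\sum (u_k - \overline{u})^2$; dividing and recognising the Higuchi slope $D = \verb|HFD|(f,N,k_\mathrm{max})$ in the first term gives $L = 2 - D$, which is the claim. The computation is routine; the one point that needs care --- and the source of the constant $2$ --- is the bookkeeping of the powers of $k$: the factor $\tfrac{k}{N-1}$ carried by $\widetilde{L}$ against the factor $\tfrac{1}{k}$ carried by $L(k)$ produces the $k^2$ above, hence the coefficient $-2$ on $u_k$, and ultimately the shift from $D$ to $2-D$. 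I would also make explicit that the additive term $\log(N-1)$ plays no role, since the least-squares slope is invariant under translations of both the $x$- and the $y$-data.
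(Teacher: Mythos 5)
Your proposal is correct and follows essentially the same route as the paper: you establish the identity $\widetilde{L}\bigl(\tfrac{k}{N-1}\bigr) = \tfrac{k^2}{N-1}\,L(k)$, pass to logarithms to obtain the affine relation between the two data sets (the paper's equations \eqref{A}, \eqref{B}, \eqref{C}), and substitute into the least-squares slope formula \eqref{slope} to get $L = 2 - D$. Your treatment is in fact slightly more careful than the paper's on two minor points --- you address the agreement of the index sets and the degenerate case $|\mathcal{I}| \leq 1$, and you avoid the paper's typographical slip where $D$ is set equal to the denominator $\sum_{k}(\log\tfrac{1}{k}-\overline{x})^2$ rather than the full slope quotient --- but the underlying argument is the same.
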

\begin{proof}

Using formula \eqref{slope} for the slope obtained by the method of least squares, we have
\begin{equation}\label{LL}
L = \frac{\sum\limits_{k=1}^{k_\mathrm{max}} (\log \frac{k}{N-1} - \overline{x}_*)(\log \tilde{L}(\frac{k}{N-1}) - \overline{y}_*)}{\sum\limits_{k=1}^{k_\mathrm{max}} (\log \frac{k}{N-1} - \overline{x}_*)^2}
\end{equation}
with
\begin{equation*}
\overline{x}_* = \frac{1}{k_\mathrm{max}} \sum_{k=1}^{k_\mathrm{max}} \log \frac{k}{N-1}, \ \ \overline{y}_* = \frac{1}{k_\mathrm{max}} \sum_{k=1}^{k_\mathrm{max}} \log \tilde{L}(\frac{k}{N-1}).
\end{equation*}
Define further
\begin{equation*}
\overline{x} = \frac{1}{k_\mathrm{max}} \sum_{k=1}^{k_\mathrm{max}} \log \frac{1}{k} \quad\text{and}\quad \overline{y} = \frac{1}{k_\mathrm{max}} \sum_{k=1}^{k_\mathrm{max}} \log L(k),
\end{equation*}
where $L(k)$ is defined as in the Higuchi algorithm,
\begin{equation*}
L(k) = \frac{1}{k} \sum_{m=1}^kL_m(k) = \frac{1}{k^2} \sum_{m=1}^k C_{N,k,m} V_{N,k,m}.
\end{equation*}
It follows that
\begin{equation}\label{A}
\log \frac{k}{N-1} - \overline{x}_* = - \left ( \log \frac{1}{k} - \overline{x} \right )
\end{equation}
as well as
\begin{equation}\label{B}
\widetilde{L}\left(\frac{k}{N-1}\right) = \frac{k^2}{N-1} \, L(k).
\end{equation}
A simple computation yields
\begin{equation}\label{C}
\log \widetilde{L}\left(\frac{k}{N-1}\right) - \overline{y}_* = -2(\log \frac{1}{k} - \overline{x}) + (\log L(k) - \overline{y}).
\end{equation}
Inserting \eqref{A} and \eqref{C} into \eqref{LL} gives
\begin{equation*}
L = 2-{\frac{\sum\limits_{k=1}^{k_\mathrm{max}} (\log \frac{1}{k} - \overline{x})(\log L(k) - \overline{y}))}{\sum\limits_{k=1}^{k_\mathrm{max}} (\log \frac{1}{k} - \overline{x})^2}}.
\end{equation*}
Setting $D:= \sum\limits_{k=1}^{k_\mathrm{max}} (\log \frac{1}{k} - \overline{x})^2$, we see that $D$ is exactly equal to the slope of a regression line through
\begin{equation*}
\mathcal{Z} = \left\{ \left(\log \frac{1}{k}, \log L(k)\right) \,\Bigg\vert\,\, k = 1, \ldots, k_\mathrm{max}\right\},
\end{equation*}
and therefore $D=\verb|HFD| (f,N,k_{\mathrm{max}})$. Consequently,
\begin{equation*}
2-L = \verb|HFD| (f,N,k_{\mathrm{max}})
\end{equation*}
which proves the assertion.
\end{proof}

\begin{example}[Weierstrass function]
Define $W:[0,1] \to \R$ by 
\begin{equation}\label{ws}
W(t) = \sum_{j=1}^\infty \lambda^{(s-2)j}\sin(\lambda^jt)
\end{equation}
where $\lambda > 1$ and $1<s<2$ are fixed. The function $W$ is continuous, nowhere differentiable, and it is known that $\mathrm{dim}_B(\mathrm{graph}(W)) = s$. (See, for instance, \cite{falconer1}, p. 162.) The sequence $\{\verb|HFD| (W,N,k_{\mathrm{max}})\}_{(N,k_\mathrm{max})}$ provides a sequence of approximations of $\mathrm{dim}_B(\mathrm{graph}(W))$. Figure \ref{fig4} represents the values of this sequence if we choose $k_\mathrm{max}=\lceil\frac{N}{2}\rceil$ and fixed $\lambda=5$ and $s=1.7$.
\end{example}

\begin{figure}[h!]
\centering
\includegraphics[scale=0.55]{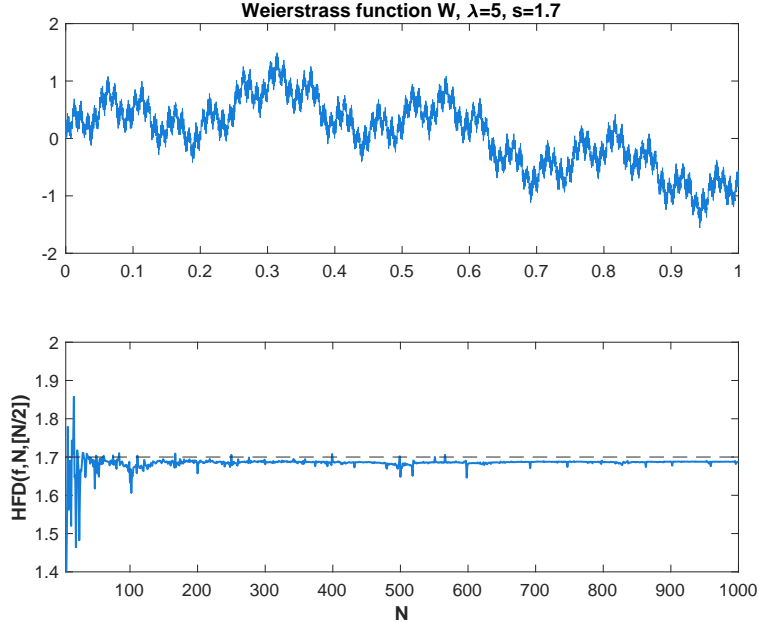}
\caption{Plot of the Weierstrass function $W$ with parameters $\lambda=5$ and $s=1.7$ together with the values $\texttt{HFD}(W,N,\lceil \frac{N}{2}\rceil)$, for $N=5, \dots, 1000$.}\label{fig4}
\end{figure}

\section{Stability Under Perturbations}

Let $f : [0,1] \to \R$ be a bounded function, $(N, k_\mathrm{max}) \in \N^2$ an admissible input, and $X_N : \{ 1, \dots, N \} \to \R$ the corresponding time series given by
\begin{equation*}
X_N(j) = \frac{j-1}{N-1}.
\end{equation*}
Let $L(1), \dots, L(k_\mathrm{max})$ be the lengths computed by the Higuchi algorithm. Assume that there is a $\kappa\in \{ 1, \dots, k_\mathrm{max} \}$ such that $L(\kappa) = 0$. This means $\kappa \not\in \mathcal{I}$ and the regression line through
\begin{equation*}
\mathcal{Z} = \{ ( \log \tfrac{1}{k}, \log L(k) ) \ | \ k \in \mathcal{I} \}
\end{equation*}
does not take the index $\kappa$ into account. Recall that $L(k)$ is defined by
\begin{equation*}
L(k) = \frac{1}{k} \sum _{m=1}^k L_m(k) = \frac{1}{k^2} \sum_{m=1}^k C_{N,k,m} V_{N,k,m}.
\end{equation*}
Thus, $L(\kappa) = 0$ means that
\begin{equation*}
V_{N,\kappa,m} = \sum_{i=1}^{\left \lceil \frac{N-m}{\kappa} \right \rceil} | X_N(m+i\kappa) - X_N(m+(i-1)\kappa)| = 0, \quad \forall m = 1, \dots, \kappa.
\end{equation*}
This implies that for every $m$ the values 
\begin{equation*}
X_N(m+(i-1)\kappa), \ \ \ i=1, \dots, \left \lceil \frac{N-m}{\kappa} \right \rceil,
\end{equation*}
lie on a line. Suppose we perturb a value $X_N(j)$ by a small constant $0<\varepsilon \ll 1$. For simplicity, we assume $j=1$. Define a new time series $X_N^\varepsilon$ via
\begin{equation*}
X_N^\varepsilon(j) \coloneqq
\begin{cases}
X_N(1) + \varepsilon, & j=1;\\
X_N(j), & \text{else}.\\
\end{cases}
\end{equation*}
Denote by $L^\varepsilon(k)$, $L_m^\varepsilon(k)$, $V_{N,k,m}^\varepsilon$, $\mathcal{I}^\varepsilon$, $\mathcal{Z}^\varepsilon$, and $D^\varepsilon$ the corresponding values in the Higuchi algorithm with respect to the perturbed time series. Observe that the values
\begin{equation}\label{coll}
X_N^\varepsilon(m+(i-1)\kappa), \quad i=1, \dots, \left \lceil \frac{N-m}{\kappa} \right \rceil,
\end{equation}
lie on a line if any only if $m > 1$. For $m=1$ the perturbation by $\varepsilon$ destroys the collinearity. If follows that $V_{N,\kappa,m}^\varepsilon = 0$ for $m > 1$ and $V_{N,\kappa,m}^\varepsilon = \varepsilon$. Hence,
\begin{equation*}
L^\varepsilon(\kappa) = \tfrac{1}{\kappa}\, L_1^\varepsilon(\kappa) = \tfrac{1}{\kappa^2} C_{N,\kappa,1} \varepsilon
\end{equation*}
which implies that $0<L^\varepsilon(\kappa) \ll 1$. On the other hand, for all $k$ which were originally in the index set $\mathcal{I}$, i.e., for which $L(k) \neq 0$, we have
\begin{equation*}
L^\varepsilon(k) = \frac{1}{k} \sum_{m=1}^k L_m^\varepsilon(k) = \frac{1}{k} \left ( L_1^\varepsilon(k) + \sum_{m=2}^k L_m(k) \right )
\end{equation*}
with
\begin{align*}
L_1^\varepsilon(k) &= \frac{1}{k}C_{N,k,1}\, \cdot\\
& \left ( | X_N(1+k)-(X_N(1)+\varepsilon)| + \sum_{i=2}^{\left \lceil \frac{N-1}{\tilde{k}} \right \rceil} | X_N(m+ik) - X_N(m+(i-1)k)) | \right ).
\end{align*}

It follows that for $\varepsilon$ sufficiently small, $k \in \mathcal{I}^\varepsilon$ and $L^\varepsilon(k) \approx L(k)$.
Therefore, the new data set $\mathcal{Z}^\varepsilon$ consists of all points $(\log \frac{1}{k}, \log L^\varepsilon(k))$ with $k \in \mathcal{I}$ and in addition of at least one new point which is 
\begin{equation*}
P \coloneqq \left ( \log \frac{1}{\kappa}, \log L^\varepsilon(\kappa) \right ).
\end{equation*}
Whilst the lengths $L^\varepsilon(k)$, $k \in \mathcal{I}$, stay nearly untouched, the new point $P$ can have a significant influence on the slope $D^\varepsilon$ of the new regression line. This follows from the fact that
\begin{equation*}
\lim_{\varepsilon \to 0+} \log L^\varepsilon(k) =
\begin{cases}
\log L(k), & k \in \mathcal{I};\\
-\infty, & k = \kappa.\\
\end{cases}
\end{equation*}

\begin{example}\label{ex5.1}
Let $(N,k_\mathrm{max}) \in \N^2$ be admissible, $\kappa \in \{ 1, \dots, k_\mathrm{max} \}$, and $c \coloneqq (c_1, \dots, c_{\kappa}) \in \R^{\kappa}$. Define $f_c:[0,1] \to \R$ to be a continuous interpolation function such that for all $m = 1, \dots, \kappa$,
\begin{equation*}
X_N(m+(i-1)\kappa) = c_m \, , \quad \forall i = 1, \dots, \left \lceil \frac{N-m}{\kappa} \right \rceil.
\end{equation*}
It follows form the preceding discussion that $L(\kappa) = 0.$ Figure \ref{fig5} shows an example with $\kappa = 10$, $N=150$, and $k_\mathrm{max} = 30$:
\begin{equation*}
c \coloneqq (1, 1.1, 1.3, 1.4, 1.3, 1.4, 1.3, 1.4, 1.3, 1.1),
\end{equation*}
where $f_c$ a linear spline through the interpolation points. Note that the slope $D$ of the regression line of the perturbed time series is approximately $D=3.5$, a nonsensical result as the box-counting dimension of $\graph (f_c)$ must be $\leq 2$.
\end{example}

\begin{figure}[h!]
\centering
\includegraphics[scale=0.5]{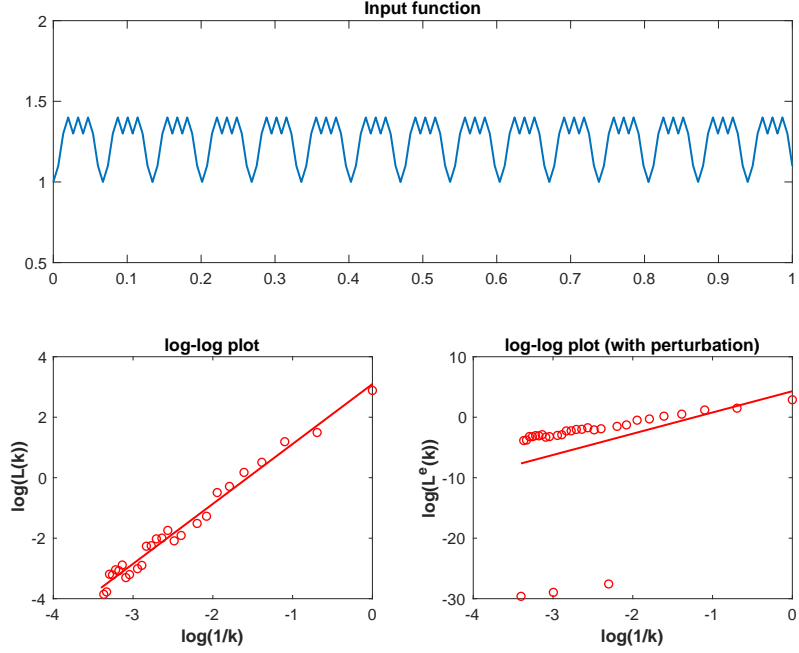}
\caption{Plot of the interpolation function $f_c$ from Example \ref{ex5.1} and the corresponding $\log-\log$ plots with and without perturbation. The chosen constants are $N=150$, $k_\mathrm{max}=30$ and the time series is perturbed by $\varepsilon = 10^{-10}$. The slope of the first regression line is $\approx 1.9$ and the slope of the second is $\approx 3.5$.}\label{fig5}
\end{figure}

\begin{example}\label{evenodd}
Let $(N,k_\mathrm{max}) \in \N^2$ be admissible and $c_1, c_2 \in \R$ with $c_1 \neq c_2.$ Consider a function $f : [0,1] \to \R$ with the property that
\begin{equation*}
f \left ( \frac{j-1}{N-1} \right ) =
\begin{cases}
c_1, & j \ \textrm{odd};\\
c_2, & j \ \textrm{even}.\\
\end{cases}
\end{equation*}
Assume that $k$ is odd, $1 \leq m \leq k$ and $i \in \Z$. Then $m+ik$ is even if and only if $m+(i-1)k$ is odd. Therefore,
\begin{equation*}
V_{N,k,m} = \sum_{i=1}^{\left \lceil \frac{N-m}{k} \right \rceil} |X(m+ik)-X(m+(i-1)k)| = |c_1-c_2| \left \lceil \frac{N-m}{k} \right \rceil.
\end{equation*}
This implies 
\begin{equation*}
L(k) = \frac{(N-1)|c_1 - c_2|}{k^2}.
\end{equation*}

For even $k$, $1 \leq m \leq k$ and $i \in \mathbb{Z}$ the values $m+ik$ are either even (for $m$ even) or odd (for $m$ odd). Hence, $0=V_{N,k,m} = L(k)$. Hence,
\begin{equation*}
\mathcal{I} = \{ 1 \leq k \leq k_\mathrm{max} \ | \ k \ \text{odd} \}
\end{equation*}
and consequently
\begin{equation*}
L(k) \propto k^{-2}, \ \ k \in \mathcal{I}.
\end{equation*}
By Proposition \ref{basics} we obtain $\verb|HFD| (f,N,k_{\mathrm{max}}) = 2.$
\end{example}

\begin{corollary}
For every admissible input $(N,k_\mathrm{max}) \in \N^2$ there exists a bounded function $f : [0,1] \to \R$ with
\begin{equation*}
\verb|HFD| (f,N,k_{\mathrm{max}}) = 2.
\end{equation*}
Further $f$ can be chosen to be of bounded variation. In this case:
\begin{equation*}
\mathrm{dim}_B(\mathrm{graph}(f)) = 1 \neq 2 = \verb|HFD| (f,N,k_{\mathrm{max}}).
\end{equation*}
\end{corollary}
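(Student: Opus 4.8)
The plan is to realize the required function from the alternating sample pattern of Example~\ref{evenodd} and then verify the bounded-variation and dimension claims separately. The key simplification is that the Higuchi algorithm sees $f$ only through the samples $X_N(j) = f\big(\tfrac{j-1}{N-1}\big)$, so once the samples are fixed I am free to interpolate them in any way that makes the dimension of $\mathrm{graph}(f)$ easy to control. Concretely, I would fix two distinct reals $c_1 \neq c_2$, set $X_N(j) = c_1$ for odd $j$ and $X_N(j) = c_2$ for even $j$, and let $f$ be the continuous linear spline through the points $\big(\tfrac{j-1}{N-1}, X_N(j)\big)$.

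For the value of the Higuchi dimension, Example~\ref{evenodd} already does the computation: one gets $L(k) = (N-1)|c_1 - c_2|\,k^{-2}$ for odd $k$ and $L(k) = 0$ for even $k$, so $L(k) \propto k^{-2}$ on the index set $\mathcal{I} = \{\,k \text{ odd} : 1 \le k \le k_\mathrm{max}\,\}$, whence Proposition~\ref{basics}(2) gives $\verb|HFD|(f,N,k_\mathrm{max}) = 2$. For the remaining claims I note that the spline is piecewise linear with at most $N-1$ pieces, hence monotone on each piece and of bounded variation with $V_0^1(f) = (N-1)|c_1 - c_2| < \infty$; thus $f \in C[0,1] \cap BV[0,1]$, and the result quoted after Lemma~\ref{lem3.2} (see \cite{falconer1}) yields $\mathrm{dim}_B(\mathrm{graph}(f)) = 1$. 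Putting the two computations together gives $1 = \mathrm{dim}_B(\mathrm{graph}(f)) \neq 2 = \verb|HFD|(f,N,k_\mathrm{max})$, as desired.

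The step I expect to be the real obstacle is the quantifier \emph{for every} admissible input, since Proposition~\ref{basics}(2) requires $|\mathcal{I}| \ge 2$ and the odd-index set contains two elements only once $k_\mathrm{max} \ge 3$. When $k_\mathrm{max} = 2$ the alternating pattern degenerates to $\mathcal{I} = \{1\}$ and returns $\verb|HFD| = 1$, so there I would instead tune the samples so that $L(1)$ and $L(2)$ are both positive and satisfy $L(1) = 4\,L(2)$; the two-point slope $\big(\log L(2) - \log L(1)\big)/\big(\log \tfrac12 - \log 1\big)$ then equals $2$ exactly, and these values are again easily realized by a bounded-variation spline. The genuinely degenerate case is $k_\mathrm{max} = 1$, which is forced when $N = 2$: there $|\mathcal{I}| \le 1$ unconditionally, so $\verb|HFD| \equiv 1$ and the value $2$ is simply unattainable. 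Making the statement literally correct for every admissible pair therefore requires either handling $k_\mathrm{max} = 2$ by the tuned construction above or explicitly excluding the boundary case $k_\mathrm{max} = 1$; this is the point where care is needed.
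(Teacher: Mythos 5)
Your construction is exactly the paper's: its proof of this corollary simply invokes Example~\ref{evenodd} (alternating samples $c_1,c_2,c_1,c_2,\dots$), interpolates by a continuous linear spline, gets $\mathrm{dim}_B(\mathrm{graph}(f))=1$ from bounded variation, and gets $\texttt{HFD}(f,N,k_\mathrm{max})=2$ from $L(k)\propto k^{-2}$ on the odd index set via Proposition~\ref{basics}(2). Where you go beyond the paper is the boundary-case analysis, and you are right on both counts: the paper's proof is silent there, and the care is warranted. For $k_\mathrm{max}=2$ the odd-index set is $\mathcal{I}=\{1\}$, the algorithm's degenerate branch fires, and the even--odd construction returns $\texttt{HFD}=1$, not $2$; your repair (tune the samples so that $L(1)=4L(2)>0$, which forces the two-point slope $\log\bigl(L(2)/L(1)\bigr)/(-\log 2)=2$) is sound and is indeed realizable by a spline --- for instance, deform linearly from affine samples (where $L(1)/L(2)=2$ by Proposition~\ref{basics}(3)) toward the alternating pattern (where $L(2)\to 0$ while $L(1)$ stays bounded away from $0$, so $L(1)/L(2)\to\infty$) and apply the intermediate value theorem to the continuous ratio. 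For $k_\mathrm{max}=1$, which is the \emph{only} admissible choice when $N=2$, one has $|\mathcal{I}|\le 1$ for every input function, so the algorithm outputs $1$ unconditionally and the value $2$ is unattainable; the corollary's ``for every admissible input'' is therefore literally false as stated, and the statement should exclude $k_\mathrm{max}=1$ (or $k_\mathrm{max}\le 2$ if one keeps the paper's proof verbatim). In short: same approach as the paper on the generic case $k_\mathrm{max}\ge 3$, but your proposal is the more complete argument, and the gap you flag lies in the paper, not in your reasoning.
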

\begin{proof}
Following the construction of the preceding example, we can choose $f$ to be an interpolation function through a finite set of interpolation points. In particular, we can choose $f$ as a function of bounded variation such as a linear spline. Then
\begin{equation*}
\mathrm{dim}_B(\mathrm{graph}(f)) = 1
\end{equation*}
but $\verb|HFD| (f,N,k_{\mathrm{max}}) = 2$.
\end{proof}

We continue with the function from Example \ref{evenodd}. A perturbation of $X_N(1)$ by a small value $\varepsilon > 0$ yields
\begin{equation*}
L(k) \neq 0, \quad \forall k = 1, \dots, k_\mathrm{max}.
\end{equation*}
So the index set $\mathcal{I}$ which previously consisted  of all odd values of $k$ now changes to $\mathcal{I} = \{ 1, \dots, k_\mathrm{max} \}$. Hence, $\mathcal{I}$ is maximal. Furthermore,
\begin{equation*}
\lim_{\varepsilon \to 0+} \log L^\varepsilon(k) = -\infty,
\end{equation*}
for all even $k$. Figure \ref{fig6} displays the corresponding regression lines.

\begin{figure}[h!]
\centering
\includegraphics[scale=0.5]{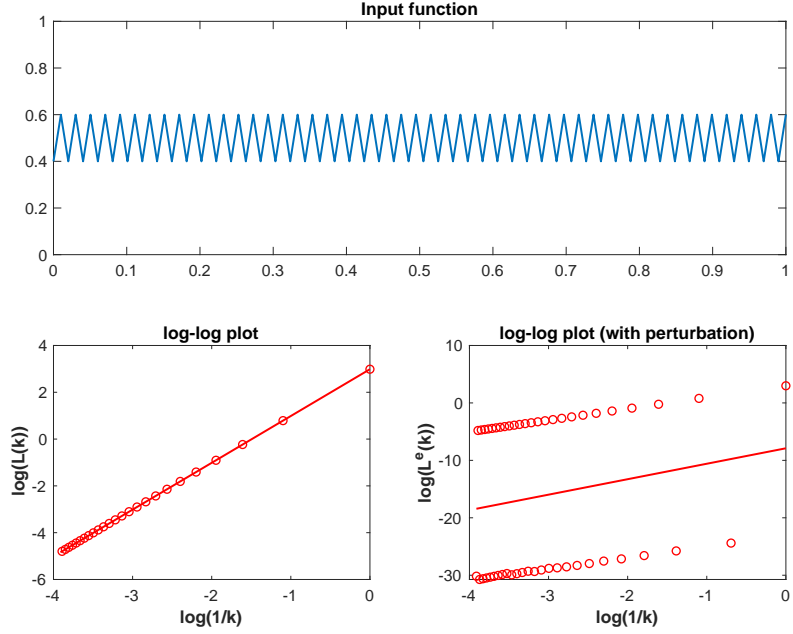}
\caption{Plot of the interpolation function $f$ of Example \ref{evenodd} with $N=100$, $k_\mathrm{max} = \lceil\frac{N}{2}\rceil$, $c_1 = 0.4$, and $c_2 = 0.6.$ The bottom left plot shows that the points in $\mathcal{Z}$ lie exactly on a line with slope 2. The bottom right plot displays the data set $\mathcal{Z}^\varepsilon$ with $\varepsilon=10^{-10}$. The regression line has slope $D^\varepsilon \approx 2.7$, in contradiction to the fact that the box-counting dimension of $\graph (f)$ must be $\leq 2$}\label{fig6}
\end{figure}

\section*{Appendix: Proof of Theorem \ref{thm3.3}}\label{app}

\noindent
\textbf{Theorem 3.3.}\textit{
Let $f \in C[a,b] \cap BV[a,b]$ and let $(P_n)_{n \in \N}$ be a sequence of partitions of $[a,b]$ such that $|P_n| \to 0$ as $n \to \infty$. Then
\begin{equation*}
\lim_{n \to \infty} V_{P_n}(f) = V_a^b(f).
\end{equation*}
}
\vspace*{-20pt}
\begin{proof}
Let $\varepsilon > 0$ and let $P \coloneqq \{ 0 = x_0 < \cdots < x_n = 1 \}$ be a partition of $[0,1]$ such that
\begin{equation*}
V_0^1(f) - \varepsilon \leq V_P(f) \leq V_0^1(f).
\end{equation*}
Let $I_k \coloneqq (x_{k-1}, x_k)$, $k = 1, \dots, n.$ The uniform continuity of $f$ on the compact set $[0,1]$ yields the existence of a $\delta > 0$ such that
\begin{equation}\label{unif}
|x-y| < \delta \implies |f(x)-f(y)| < \frac{\varepsilon}{2n} \ \ \forall x,y \in [0,1].
\end{equation}
Furthermore, $\delta$ can be chosen in such a way that any partition of $Q$ with $|Q| < \delta$ contains at least two points in every subinterval $I_k$. Let $Q$ be such a partition. Define
\begin{equation*}
Q_k \coloneqq Q \cap I_k, \quad k=1, \dots, n
\end{equation*}
then $Q_k$ contains at least two points. Set $a_k \coloneqq \min Q_k$ and $b_k \coloneqq \max Q_k$.
Let $R$ be the partition $R = P \cup Q_1 \cup \cdots \cup Q_n.$ Since $R$ is a refinement of $P$, we have
\begin{equation*}
V_R(f) \geq V_P(f).
\end{equation*}
By the definition of $R$ and inequality \eqref{unif}, it follows that
\begin{align*}
V_R(f) & = \sum_{k=1}^n V_{Q_k}(f) + \sum_{k=1}^n (|f(x_{k-1})-f(a_k)| + |f(x_k)-f(b_k)|) \\
 & \leq \sum_{k=1}^n V_{Q_k}(f) + \sum_{k=1}^n \left( \frac{\varepsilon}{2n} + \frac{\varepsilon}{2n} \right) \leq V_Q(f) + \varepsilon.
\end{align*}
Hence,
\begin{equation*}
V_0^1(f) - \varepsilon \leq V_P(f) \leq V_Q(f) + \varepsilon
\end{equation*}
and the result follows.
\end{proof}

\bibliographystyle{plain}
\bibliography{Higuchi}

\end{document}